\documentclass[11pt]{amsart}

\usepackage{amsmath,amssymb,amsbsy,amsthm,latexsym,
         amsopn,amstext,amsxtra,euscript,amscd,amsthm, mathtools, dsfont}
         
\usepackage{fullpage,amssymb,amsmath}
\usepackage{enumerate}
\usepackage[shortlabels]{enumitem} 
\usepackage{comment}
\usepackage{hyperref} 
\usepackage[capitalise,nameinlink]{cleveref}
\usepackage{url}
\usepackage{xcolor}
\usepackage{amsmath,amsthm,amssymb, graphicx, multicol, array, mathtools, enumerate,enumitem,hyperref,algorithm,algorithmic,tikz}
\usepackage{fancyhdr}
\usepackage{upgreek}
\usepackage{physics}
\usepackage{xcolor}
\usepackage{pythontex} 

\usepackage{bbm}
\usepackage{mathrsfs} 
\newtheorem*{corollary*}{Corollary}
\newtheorem*{theorem*}{Theorem}
\newtheorem{theorem}{Theorem} 

\newtheorem{proposition}[theorem]{Proposition}
\newtheorem{lemma}[theorem]{Lemma}

\newtheorem{question}[theorem]{Question}
\newtheorem*{question*}{Question}

\theoremstyle{definition}

\newtheorem*{remark}{Remark}

\theoremstyle{remark}

\setcounter{tocdepth}{1}
\numberwithin{equation}{section}

\newcommand{\N}{\mathbb{N}}
\newcommand{\Z}{\mathbb{Z}}

\newcommand{\R}{\mathbb{R}}

\newcommand{\veps}{\varepsilon}

\renewcommand{\epsilon}{\varepsilon}

\begin{document}

\title{A counterexample to symmetry of $L^p$ norms of eigenfunctions}

\author{Gabriel Beiner}
\address{Department of Mathematics\\ University of Toronto\\ 40 St. George Street\\ Toronto\\
ON\\ Canada\\ M5S 2E4
} 
\email{gabriel.beiner@mail.utoronto.ca}

\author{Nancy Mae Eagles}
\address{ Trinity College, University of Cambridge, Cambridge, CB21TQ, United Kingdom
} 
\email{nme22@cam.ac.uk}

\author{William Verreault}
\address{
D\'{e}partement de Math\'{e}matiques et de Statistique\\ 
Universit\'{e} Laval\\ 
Qu\'ebec\\
QC\\
G1V 0A6 \\
Canada} 
\email{william.verreault.2@ulaval.ca}

\author{Runyue Wang}
\address{Department of Mathematics \& Statistics\\ McMaster University\\ Hamilton\\
ON\\ Canada\\ L8S 4L8}
\email{wangr109@mcmaster.ca}

\maketitle

\vspace*{-4mm}

\begin{abstract}
We answer a question of Jakobson and Nadirashvili on the asymptotic behavior of the $L^p$ norms of positive and negative parts of eigenfunctions of the Laplacian. More precisely, we show that there exists a sequence of eigenfunctions $\psi_n$ on the flat $d$-torus for $d\geq 3$, with eigenvalues $\lambda_n\to\infty$ as $n\to\infty$, such that the ratio 
$
\|\psi_n\chi_{\{\psi_n>0\}}\|_p / \|\psi_n\chi_{\{\psi_n<0\}}\|_p 
$
does not tend to $1$ as $n\to\infty$ for $1<p\leq \infty$.
Our argument is elementary and computer-assisted.

\end{abstract}

\section{Introduction} \label{intro}

In \cite{MR1900757}, Jakobson and Nadirashvili contribute to the extensive literature on the behavior of $L^p$ norms of eigenfunctions of the Laplacian by investigating the relationship between positive and negative parts of Laplace eigenfunctions. In particular, they study quasi-symmetry properties of their $L^p$ norms. The authors conclude their paper by asking the following.
\begin{question} \label{q:J-N}
For $\psi_\lambda$ a nonconstant eigenfunction of the Laplacian with eigenvalue $\lambda$, does $$
\frac{\|\psi_\lambda\chi_{\{\psi_\lambda>0\}}\|_p}{\|\psi_\lambda\chi_{\{\psi_\lambda<0\}}\|_p}\to 1
$$ as $\lambda\to\infty$ for $1<p<\infty$ on a given manifold?
\end{question}
The same question is raised in \cite{jakobson2001geometric}. The aim of this paper is to answer \cref{q:J-N} in the negative.\\

The investigation of the symmetry between positive and negative parts of eigenfunctions of the Laplacian is motivated by predictions of the random wave conjectures in quantum chaos and is intimately related to Yau's conjecture (see \cite{jakobson2001geometric} and \cite{logunov2019review}). Donnelly and Fefferman \cite{MR943927} proved Yau's conjecture for real analytic manifolds, and as a corollary obtained the following important result regarding these symmetry distribution problems.
\begin{theorem}[\cite{MR943927}, Corollary 7.10] \label{thm:D-F}
For $M$ a compact analytic Riemannian manifold and $\psi_\lambda$ a Laplace eigenfunction, there exists a constant $C>0$, depending only on the manifold, such that
$$
\frac{1}{C} \leq \frac{\emph{\text{vol}}\{x\in M: \psi_\lambda(x)>0\}}{\emph{\text{vol}}\{x\in M: \psi_\lambda(x)<0\}} \leq C.
$$
\end{theorem}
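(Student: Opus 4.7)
The plan is to combine the mean-zero property of Laplace eigenfunctions with the quantitative growth/doubling estimates for real analytic eigenfunctions that form the backbone of \cite{MR943927}. Since $\Delta \psi_\lambda = -\lambda \psi_\lambda$ and $M$ is closed, integration yields $\int_M \psi_\lambda\,dV = 0$, equivalently $\int_M \psi_\lambda^+\,dV = \int_M \psi_\lambda^-\,dV$. Writing $V^{\pm} = \text{vol}\{\pm\psi_\lambda>0\}$ and normalizing $\|\psi_\lambda\|_\infty = 1$, the trivial bound $\int\psi_\lambda^+\le V^+$ immediately gives $V^- \geq \int\psi_\lambda^- = \int\psi_\lambda^+$, but to obtain the two-sided ratio bound one still needs a matching lower bound of the form $\int\psi_\lambda^+\gtrsim V^+$, and symmetrically for $\psi_\lambda^-$.

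The deeper ingredient is the doubling property for eigenfunctions on real analytic manifolds: Donnelly and Fefferman establish, by complexifying $\psi_\lambda$ into a Grauert tube of $M$ and applying Hadamard three-circle type arguments to the holomorphic extension, that there exist constants $r_0, C' > 0$ depending only on $M$ with
\begin{equation*}
\sup_{B(x,2r)}|\psi_\lambda| \le e^{C'\sqrt{\lambda}}\sup_{B(x,r)}|\psi_\lambda|, \qquad r\le r_0.
\end{equation*}
Combining this with standard elliptic regularity, I would argue that each nodal domain $\Omega$ of $\psi_\lambda^+$ contains a ball of radius $\sim 1/\sqrt{\lambda}$ on which $\psi_\lambda$ is comparable to $\sup_\Omega\psi_\lambda$. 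Coupled with the Faber--Krahn lower bound $\text{vol}(\Omega)\gtrsim \lambda^{-d/2}$, which applies because $\lambda$ equals the Dirichlet ground state eigenvalue on $\Omega$, this yields $\int_\Omega\psi_\lambda\gtrsim c\,\text{vol}(\Omega)\sup_\Omega\psi_\lambda$. Summing over the nodal domains of both signs and invoking the mean-zero identity then produces $V^+ \lesssim V^-$; the reverse inequality follows by symmetry.

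The main obstacle is establishing the non-degeneracy of $\psi_\lambda$ on a definite fraction of each nodal domain. Faber--Krahn controls only the volume, not the shape of $\Omega$, so one must rule out that $\psi_\lambda$ is large only on a tiny sub-portion of $\Omega$. This is exactly where analyticity becomes indispensable: the doubling inequality is generally false for merely smooth metrics, but under analyticity it propagates pointwise control of $\psi_\lambda$ between scales and prevents such collapse scenarios, allowing one to transfer local $L^\infty$ bounds into global volume comparisons.
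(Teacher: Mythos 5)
First, note that the paper does not prove this statement at all: it is quoted verbatim from Donnelly--Fefferman \cite{MR943927} (their Corollary 7.10) as background, so there is no internal proof to compare against. Judged on its own merits, your sketch starts correctly (the mean-zero identity $\int_M\psi_\lambda=0$ and hence $\int\psi_\lambda^+=\int\psi_\lambda^-$ is indeed the entry point of every proof of quasi-symmetry), but the nodal-domain-by-nodal-domain strategy has several genuine gaps. (i) The claim that each nodal domain $\Omega$ contains a ball of radius $\sim\lambda^{-1/2}$ on which $\psi_\lambda$ is comparable to $\sup_\Omega\psi_\lambda$ is not available: in dimension $d\geq 3$ the inner radius of nodal domains is only known to be $\gtrsim\lambda^{-\alpha(d)}$ with $\alpha(d)>1/2$ (Mangoubi), and even at the maximum point of $\psi_\lambda$ on $\Omega$, controlling $\sup_{B}|\psi_\lambda|$ on a wavelength ball $B$ that exits $\Omega$ requires the doubling inequality, whose constant $e^{C\sqrt\lambda}$ is exactly what you cannot afford. (ii) Faber--Krahn is used in the wrong direction: it gives $\mathrm{vol}(\Omega)\gtrsim\lambda^{-d/2}$, a \emph{lower} bound, whereas to conclude $\int_\Omega\psi_\lambda\gtrsim\mathrm{vol}(\Omega)\sup_\Omega\psi_\lambda$ from non-degeneracy on a single wavelength ball you would need that ball to occupy a \emph{definite fraction} of $\Omega$, i.e.\ an upper bound on $\mathrm{vol}(\Omega)$, which is false (a nodal domain can have volume comparable to $\mathrm{vol}(M)$). (iii) Even granting $\int_\Omega\psi_\lambda\gtrsim\mathrm{vol}(\Omega)\sup_\Omega\psi_\lambda$ for every nodal domain, summing and invoking $\int\psi_\lambda^+=\int\psi_\lambda^-$ only compares the weighted sums $\sum\mathrm{vol}(\Omega)\sup_\Omega|\psi_\lambda|$; since the suprema over different nodal domains can differ by factors as large as $e^{C\sqrt\lambda}$, this does not yield a $\lambda$-independent comparison of $V^+$ and $V^-$.

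The actual Donnelly--Fefferman mechanism is different and worth internalizing: rather than decomposing by nodal domains, one covers $M$ by balls of wavelength radius and uses the global doubling bound (exponent $C\sqrt\lambda$) together with a pigeonhole/summation argument to show that a definite fraction of these balls are ``good,'' meaning the local doubling index is bounded by a universal constant. On a good ball, elliptic estimates force $|\psi_\lambda|$ to be bounded below by a fixed fraction of its supremum on a fixed fraction of the ball, and a localized version of the mean-zero/mean-value argument then shows both sign sets occupy a definite proportion of such balls. Summing over good balls gives $\mathrm{vol}\{\psi_\lambda>0\}\geq c\,\mathrm{vol}(M)$ and likewise for the negative set, which is equivalent to the stated two-sided ratio bound. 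Your instinct that analyticity enters through doubling is right, but the doubling estimate must be converted into a statement about \emph{most} wavelength balls, not applied to individual nodal domains.
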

Nadirashvili gave a different proof for analytic surfaces in \cite{MR1112199}.
It is worth noting that the analogous statement for smooth manifolds remains open and is sometimes referred to as quasi-symmetry conjecture \cite{logunov2019review}. 
\\

It is natural to inquire about the behavior of the ratio \[\text{vol}\{\psi_\lambda>0\} / \text{vol}\{\psi_\lambda<0\}\] in the semiclassical limit as $\lambda\to\infty$. In that regard, it has been conjectured that the constant $C$ in \cref{thm:D-F} can be taken to be $1$ asymptotically as $\lambda\to\infty$, which has sometimes been referred to as the symmetry conjecture \cite{LogunovVideo, eigentorus}. This is supported by Berry's conjecture for manifolds with constant negative curvature \cite{MR489542}, but is believed to hold much more generally \cite{LogunovVideo}.
Recently, Mart\'inez and Torres de Lizaur \cite{eigentorus} showed that this is false by giving a computer-assisted counterexample on the flat torus $\R^d/(2\pi\Z)^d$ for $d\geq 3$. This is in some way the simplest counterexample, since they also proved that the constant $C$ in \cref{thm:D-F} can be taken to be $1$ for the flat $2$-torus. 

 The extension of the previous questions to $L^p$ norms is natural. Nadirashvili \cite{MR1112199} showed the $L^\infty$ analogue of \cref{thm:D-F}, while Jakobson and Nadirashvili \cite{MR1900757} proved the following $L^p$ generalization of \cref{thm:D-F} for $p\geq 1$.

\begin{theorem}[Jakobson--Nadirashvilli]
Given a nonconstant real eigenfunction of the Laplacian $\psi_\lambda$ on a smooth compact manifold $M$ and $p\geq 1$, there exists $C>0$, depending only on $p$ and the manifold $M$, such that
\[\frac{1}{C} \leq \frac{\|\psi_\lambda \chi_{\{\psi_\lambda>0\}}\|_p}{\|\psi_\lambda \chi_{\{\psi_\lambda<0\}}\|_p} \leq C.\]
\end{theorem}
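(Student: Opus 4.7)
My plan is to reduce to the two endpoints $p=1$ and $p=\infty$ where the symmetry is available, and then interpolate. At $p=1$ the result is free: since $\psi_\lambda$ is a nonconstant eigenfunction, $L^2$-orthogonality to the constants gives $\int_M\psi_\lambda\,dV=0$, which immediately yields the sharp identity $\|\psi_\lambda^+\|_1=\|\psi_\lambda^-\|_1$ (writing $\psi_\lambda^\pm:=\pm\psi_\lambda\chi_{\{\pm\psi_\lambda>0\}}$). At $p=\infty$ the statement is exactly Nadirashvili's $L^\infty$ theorem from \cite{MR1112199}, which yields a constant $K=K(M)>0$ with $1/K\le\|\psi_\lambda^+\|_\infty/\|\psi_\lambda^-\|_\infty\le K$.

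For intermediate $1<p<\infty$ I would combine log-convexity of $L^p$ norms with a reverse Hölder (power-mean) inequality. Log-convexity gives the upper bound
\[
\|\psi_\lambda^+\|_p\le\|\psi_\lambda^+\|_1^{1/p}\|\psi_\lambda^+\|_\infty^{(p-1)/p},
\]
while the power-mean inequality gives the matching lower bound
\[
\|\psi_\lambda^-\|_p\ge\|\psi_\lambda^-\|_1\,|M|^{-(p-1)/p}.
\]
Using the $L^1$ equality and Nadirashvili's $L^\infty$ bound reduces everything to controlling the ``concentration ratio'' $\|\psi_\lambda^-\|_\infty/\|\psi_\lambda^-\|_1$; the opposite direction of the inequality follows by the substitution $\psi_\lambda\mapsto-\psi_\lambda$.

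The main obstacle is that this concentration ratio is \emph{not} a priori uniformly bounded in $\lambda$ on an arbitrary smooth manifold: for an eigenfunction whose negative part concentrates in a ball of radius $\sim\lambda^{-1/2}$, it grows like $\lambda^{d/2}$. Closing the gap therefore requires a genuine eigenfunction-specific input, and I would attempt this by descending to the nodal-domain picture. On each positive nodal domain $\Omega$, the restriction $\psi_\lambda|_\Omega$ is the positive first Dirichlet eigenfunction of $\Omega$ with eigenvalue $\lambda$; Faber--Krahn gives $|\Omega|\gtrsim\lambda^{-d/2}$, and Courant bounds the number of nodal domains by $\lesssim\lambda^{d/2}$. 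Combined with uniform $L^q$-comparison estimates for first Dirichlet eigenfunctions (obtainable via elliptic Harnack inequalities or heat-kernel bounds) and careful summation across positive and negative nodal domains, one expects a ratio bound depending only on $p$ and $M$. This nodal-domain bookkeeping is where I expect the real technical work to lie.
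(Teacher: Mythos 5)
First, a contextual remark: the paper quotes this theorem from Jakobson--Nadirashvili \cite{MR1900757} and does not prove it, so there is no in-paper argument to compare against; I am assessing your sketch on its own terms. Your endpoints are correct: $\int_M\psi_\lambda\,dV=0$ gives the exact $L^1$ identity, and Nadirashvili's theorem supplies the $L^\infty$ comparison. The gap is in the middle, and it is the one you yourself flag. Your interpolation bounds the ratio by $\bigl(\|\psi_\lambda^+\|_\infty\,\mathrm{vol}(M)/\|\psi_\lambda^+\|_1\bigr)^{(p-1)/p}$, i.e.\ by a power of the concentration ratio of a \emph{single} sign, and that quantity genuinely grows like a power of $\lambda$ (already for zonal or highest-weight spherical harmonics on $S^2$). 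The structural reason the loss is fatal is that you treat the two signs asymmetrically: the numerator is interpolated against $\|\psi_\lambda^+\|_\infty$, while the denominator is bounded below using only $\|\psi_\lambda^-\|_1$ and $\mathrm{vol}(M)$, discarding all profile information about $\psi_\lambda^-$. What is needed instead is a \emph{reverse} inequality $\|\psi_\lambda^-\|_p^p\ge c_p\,\|\psi_\lambda^-\|_\infty^{p-1}\|\psi_\lambda^-\|_1$ matching the trivial upper bound $\|\psi_\lambda^+\|_p^p\le\|\psi_\lambda^+\|_\infty^{p-1}\|\psi_\lambda^+\|_1$; then the unbounded factors $\|\psi_\lambda^{\pm}\|_\infty^{p-1}$ cancel in the ratio via the $L^\infty$ comparison, and the $\|\cdot\|_1$ factors cancel exactly. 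That reverse inequality, an eigenfunction-specific estimate, is the actual content of the Jakobson--Nadirashvili argument and is precisely what your sketch defers.

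The proposed repair via Faber--Krahn and Courant cannot supply it. Those results control only the volumes and the number of nodal domains, not how the $L^1$ mass or the supremum is distributed among them. Highest-weight spherical harmonics are a concrete obstruction: on $S^2$ their nodal domains all have comparable volume $\sim\lambda^{-1/2}$ (well above the Faber--Krahn floor $\lambda^{-d/2}$), their number $\sim\lambda^{1/2}$ is well below Courant's bound, and yet $\|\psi_\lambda\|_\infty/\|\psi_\lambda\|_1\sim\lambda^{1/4}\to\infty$. So bookkeeping based solely on domain counts and volumes is blind to exactly the concentration phenomenon you need to control. Moreover, the ``uniform $L^q$-comparison estimates for first Dirichlet eigenfunctions'' you invoke, with constants independent of the domain, are not available for the long thin nodal domains that actually occur. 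As written, the proof has a genuine gap at its central step.
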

We remark that the results of Mart\'inez and Torres de Lizaur \cite{eigentorus} also prove that $C=1$ works for the flat $2$-torus in the previous theorem.
In the rest of this paper, we set out to provide a negative answer to \cref{q:J-N} on the standard flat torus $\mathbb{T}^d=\R^d/\Z^d$, $d\geq 3$, using the counterexample developed for the volume case in \cite{eigentorus}. We will present the argument for the $3$-torus, but it obviously extends to higher dimensions.

We adopt the following notation. We write
$$\psi_+ = \psi \chi_{\{\psi>0\}} \qquad \text{and} \qquad \psi_- = -\psi  \chi_{\{\psi<0\}},$$ where $\chi_A$ stands for the indicator function of the set $A$. Both of these functions are nonnegative by definition. We use $\psi_{\pm}$ when the statement applies to both $\psi_+$ and $\psi_-$. We will always integrate over the manifold $\mathbb{T}^3$, but we drop the index in the integrals for brevity. For the same reason, we omit the differential symbol $dV$. 

\begin{remark}
In \cref{q:J-N}, the endpoints $p=1$, $p=\infty$ are excluded. This is simply because for $p=1$, the answer is affirmative since the ratio is always equal to $1$ (indeed, $\psi=\psi_+-\psi_-$ and $\int\psi=0$ by Green's identities); while for $p=\infty$, it was shown in \cite{MR1900757} that the answer is negative using even zonal spherical harmonics. Although it was only stated that $\|\psi_+\|_{\infty}/\|\psi_-\|_{\infty}> 1$, this is sufficient to answer \cref{q:J-N} for $p=\infty$ on the sphere.
\end{remark}

 Finally, the main result of this paper, answering \cref{q:J-N}, is the following theorem. 
\begin{theorem} \label{thm:counter}
There exists a sequence of eigenfunctions $\psi_n$ on the flat $d$-torus for $d\geq 3$, with eigenvalues $\lambda_n\to\infty$ as $n\to\infty$, such that for all $1< p\leq \infty$,
\[\frac{\|\psi_{n,+}\|_p}{\|\psi_{n,-}\|_p} \not \to 1\]
as $n\to \infty$.
\end{theorem}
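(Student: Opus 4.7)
The plan is to leverage the volume-asymmetric eigenfunction $\psi_0$ on $\TT^3=\R^3/\Z^3$ constructed in \cite{eigentorus} and upgrade its volume asymmetry to an $L^p$ asymmetry for every $p\in(1,\infty]$. The starting observation is a scaling reduction: if $\psi_0$ is a Laplace eigenfunction on $\TT^3$ with eigenvalue $\lambda_0$, then $\psi_n(x):=\psi_0(nx)$ is $\Z^3$-periodic, hence well-defined on $\TT^3$, and satisfies $\Delta\psi_n=-n^2\lambda_0\,\psi_n$, so $\lambda_n=n^2\lambda_0\to\infty$. A change of variables using the periodicity of $\psi_0$ immediately gives $\|\psi_{n,\pm}\|_p=\|\psi_{0,\pm}\|_p$ for every $p\in[1,\infty]$, so the ratio $\|\psi_{n,+}\|_p/\|\psi_{n,-}\|_p$ is constant in $n$. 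Therefore \cref{thm:counter} reduces to proving that for the explicit $\psi_0$ of \cite{eigentorus} one has $\|\psi_{0,+}\|_p\neq\|\psi_{0,-}\|_p$ for every $p\in(1,\infty]$.

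For finite $p>1$, I would pass to the distribution functions $\mu_+(t):=|\{\psi_0>t\}|$ and $\mu_-(t):=|\{\psi_0<-t\}|$ and use the layer-cake identity
\[
\|\psi_{0,+}\|_p^p-\|\psi_{0,-}\|_p^p = p\int_0^\infty t^{p-1}h(t)\,dt,\qquad h:=\mu_+-\mu_-.
\]
Since $\int\psi_0=0$, one has $\int_0^\infty h=0$, so $h$ must change sign. The crux is to verify, from the explicit formula for $\psi_0$, that $h$ has \emph{exactly one} sign change, say at some $t_*>0$. Granted this, a short moment-monotonicity argument (comparing the integrand to $t_*^{p-1}$ and noting that $(t/t_*)^{p-1}$ is strictly less than $1$ on one side of $t_*$ and strictly greater on the other for $p\neq 1$) gives that the integral above is strictly nonzero for every $p>1$, yielding $L^p$ asymmetry on the full range $p\in(1,\infty)$.

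The endpoint $p=\infty$ is not detected by the one-sign-change property alone (the behavior of $h$ only controls the $L^p$ asymmetry in the limit in a weak sense), so I would handle it separately by directly checking from the explicit expression that $\max_{\TT^3}\psi_0\neq -\min_{\TT^3}\psi_0$. The main obstacle — and the place where the computer-assisted component of the proof is unavoidable — is the rigorous certification that the single-sign-change property holds for the $h$ associated to the specific $\psi_0$ of \cite{eigentorus}. This requires computing the distribution functions $\mu_\pm(t)$ with error bounds fine enough to localize the crossing and rule out additional ones, using the real-analyticity of $\psi_0$ to propagate grid estimates rigorously. Everything else (the scaling reduction, the layer-cake identity, the moment-monotonicity conclusion, and the extension from $d=3$ to $d\geq 3$) is elementary.
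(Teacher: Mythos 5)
Your scaling reduction is exactly the paper's first step, and your layer-cake strategy for finite $p$ is a genuinely different route from the paper's. But there is a real gap at the step you yourself flag as the crux: the claim that $h=\mu_+-\mu_-$ changes sign exactly once. Nothing in your argument supports this. The identity $\int_0^\infty h=0$ only forces \emph{at least} one sign change; "exactly one" is a nontrivial global property of the specific $\psi_0$ that could perfectly well fail, and if $h$ crosses zero three times your moment comparison against $t_*^{p-1}$ breaks down for some range of $p$. Moreover, the certification you defer to the computer is qualitatively harder than you suggest: to rule out additional crossings you must control the sign of $h$ on a whole neighbourhood of the crossing point $t_*$, where $h$ is small, and pointwise evaluations of $\mu_\pm(t)$ with error bars at finitely many values of $t$ cannot exclude extra sign changes in between. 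You would need something like strict monotonicity of $h$ near $t_*$, i.e., a pointwise comparison of the densities of the pushforward of Lebesgue measure under $\psi_0$ at $t$ and at $-t$ --- a far more delicate quantity to bound rigorously (it can degenerate near critical values of $\psi_0$) than the integrals $\int\psi_{0,\pm}^p$.

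The paper sidesteps this by a different reduction of the continuum of conditions to finitely many checks. It shows the ratio $\int\psi_+^p/\int\psi_-^p$ equals $1$ at $p=1$ and is strictly increasing in $p$: by the elementary bounds $1-1/x\leq\log x\leq x-1$, positivity of the derivative reduces to $f(p)+g(p)<2$ with $f(p)=\int\psi_+^{p-1}/\int\psi_+^p$ and $g(p)=\int\psi_-^{p+1}/\int\psi_-^p$, and log-convexity of $p\mapsto\int\psi_\pm^p$ (Cauchy--Schwarz) makes $f$ decreasing and $g$ increasing, so everything follows from bounds on six specific integrals, each certified by a Riemann sum with explicit error control and with comfortable margin away from the critical threshold. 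If you want to salvage your route, you need an analogous device converting the single-sign-change claim into finitely many robustly verifiable quantities; as written, the key hypothesis is neither proved nor reduced to something a rigorous computation can realistically certify.
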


An upcoming paper \cite{BV} establishes the failure of asymptotic symmetry
of $L^p$ norms on the $2$-sphere for $p\geq 6$. Coupled with the present work, it provides counterexamples to \cref{q:J-N} for model spaces of zero and constant positive curvature. Whether this question has a positive answer for manifolds of constant negative curvature remains open.

\section{A counterexample to the $L^p$ symmetry conjecture} \label{sec:proof}
For the standard flat torus, and more generally for flat tori since they are quotients of $\R^d$ by lattices, it will be enough to prove that there exists some Laplace eigenfunction $\psi$ such that
$$
\|\psi_+\|_p\neq \|\psi_-\|_p
$$
for $p\in(1,\infty]$. Indeed, assuming this fact, define
$$\psi_n(x,y,z) = \psi(nx,ny,nz)$$ for $n\in \N$.
If $\psi$ is an eigenfunction with eigenvalue $\lambda$, it follows that
$\psi_n$ is an eigenfunction with eigenvalue $\lambda n^2$ for each $n$. We also see by a change of variables that
\[\int\psi_{n,\pm}^p(x,y,z)=\int\psi_{\pm}^p(nx,ny,nz)=\int\psi_{\pm}^p(x,y,z),\]
and so 
\[\frac{\|\psi_{n,+}\|_p}{\|\psi_{n,-}\|_p} =\frac{\|\psi_{+}\|_p}{\|\psi_{-}\|_p}
\neq 1,\]
even in the limit as $n\to\infty$. 

Thus,
\cref{thm:counter} will follow from the following proposition.
\begin{proposition} \label{prop:counter}
There exists an eigenfunction $\psi$ of the Laplacian on $\mathbb{T}^3$ such that
$$
\|\psi_+\|_p \neq \|\psi_-\|_p
$$
for $1<p\leq \infty$.
\end{proposition}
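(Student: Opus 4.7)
The natural approach is to take the explicit trigonometric polynomial eigenfunction $\psi$ on $\mathbb{T}^3$ constructed by Mart\'inez and Torres de Lizaur \cite{eigentorus}, for which $\text{vol}\{\psi > 0\} \neq \text{vol}\{\psi < 0\}$, and to compare the $L^p$ norms of $\psi_+$ and $\psi_-$ via their distribution functions. Set $\mu_\pm(t) := \text{vol}\{\psi_\pm > t\}$ and $\phi := \mu_+ - \mu_-$. The layer cake formula yields
\[
\|\psi_+\|_p^p - \|\psi_-\|_p^p = p\int_0^\infty t^{p-1}\phi(t)\,dt.
\]
By the volume counterexample, $\phi(0^+) = \text{vol}\{\psi > 0\} - \text{vol}\{\psi < 0\} \neq 0$, while $\int_0^\infty \phi(t)\,dt = \|\psi_+\|_1 - \|\psi_-\|_1 = 0$ since $\int \psi = 0$ by Green's identities. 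Thus $\phi$ is a nontrivial mean-zero signed function, and the target is to show that its Mellin-type integral $\int_0^\infty t^{p-1}\phi(t)\,dt$ does not vanish for any $p \in (1, \infty]$.

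For the endpoint $p = \infty$, I would verify directly (computer-assisted) that $\sup \psi \neq -\inf \psi$; since $\psi$ is an explicit finite trigonometric polynomial, this reduces to an elementary finite-dimensional check. For $p \in (1, \infty)$, my first attempt is to establish that $\phi$ exhibits a single sign change at some $t_0 > 0$, namely $\phi \geq 0$ on $[0, t_0]$ and $\phi \leq 0$ on $[t_0, \infty)$ (assuming $\phi(0^+)>0$, otherwise swap roles). Granted this, the identity
\[
\int_0^\infty t^{p-1}\phi(t)\,dt = \int_0^\infty \bigl(t^{p-1} - t_0^{p-1}\bigr)\phi(t)\,dt,
\]
valid because $\int\phi = 0$, closes the proof: for $p > 1$, the two factors $t^{p-1} - t_0^{p-1}$ and $\phi(t)$ always have opposite signs, so the integrand is pointwise nonpositive and strictly negative on a set of positive measure (using $\phi(0^+) > 0$ and continuity of $\phi$), yielding $\|\psi_+\|_p < \|\psi_-\|_p$ for all $p > 1$ simultaneously.

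The main obstacle is verifying the single sign change property of $\phi$, which is a nontrivial structural claim about the level-set volumes of the specific Mart\'inez--Torres de Lizaur polynomial and must be addressed computer-assisted (by numerically tabulating $\mu_+$ and $\mu_-$ and comparing them, or by an analytic argument tailored to the construction). Should this fail, a fallback uses analyticity: since $\phi \not\equiv 0$, the Mellin transform $p \mapsto \int_0^\infty t^{p-1}\phi(t)\,dt$ is a nontrivial real-analytic function of $p$ on $(0, \infty)$ vanishing at $p = 1$, so its zero set in $[1, \infty)$ is discrete. Combined with the verification at $p = \infty$ and, if necessary, a small perturbation of $\psi$ within the eigenspace of the same eigenvalue to displace any accidental zeros in $(1, \infty)$, this yields the conclusion in full generality.
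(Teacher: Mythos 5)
Your reduction via the layer cake formula is correct as far as it goes: with $\phi=\mu_+-\mu_-$ you indeed have $\|\psi_+\|_p^p-\|\psi_-\|_p^p=p\int_0^\infty t^{p-1}\phi(t)\,dt$, $\int_0^\infty\phi=0$, and $\phi(0^+)\neq 0$ by the volume counterexample, and \emph{if} $\phi$ changed sign exactly once then the weighted identity with $t^{p-1}-t_0^{p-1}$ would finish the proof for all $p>1$ at once. But that single-sign-change property is precisely the content you would need to prove, and you have not proved it: it is a global statement about the superlevel-set volumes $\mu_\pm(t)$ for \emph{every} $t$ in $[0,\sup\psi]$, it does not follow from anything in the volume counterexample, and certifying it rigorously by computer is delicate near the crossing point (where the sign of $\phi$ cannot be resolved numerically, so you would additionally need a quantitative argument that the contribution of the uncertain window is dominated, uniformly in $p$, by the certified main term). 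As it stands, the heart of the proposition is deferred to an unperformed and nontrivial verification, so this is a genuine gap rather than a proof.

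The fallback does not close the gap. Discreteness of the zero set of the Mellin transform $p\mapsto\int_0^\infty t^{p-1}\phi(t)\,dt$ (which does hold, since $\phi$ is bounded with compact support and not a.e.\ zero) only gives $\|\psi_+\|_p\neq\|\psi_-\|_p$ for all $p$ outside a discrete exceptional set, whereas the proposition demands it for \emph{every} $p\in(1,\infty]$. The suggestion to perturb $\psi$ within the eigenspace to ``displace any accidental zeros'' is not an argument: a perturbation can move some zeros while creating others, and there is no mechanism offered for removing all of them simultaneously (nor for preserving the properties of $\phi$ you rely on). For contrast, the paper avoids all of this by showing that the ratio $\int\psi_+^p/\int\psi_-^p$ is \emph{strictly increasing} in $p\geq 1$; differentiating in $p$ and using $1-1/x\leq\log x\leq x-1$ reduces this to the inequality $\int\psi_+^{p-1}/\int\psi_+^p+\int\psi_-^{p+1}/\int\psi_-^p<2$, and a Cauchy--Schwarz interpolation argument shows the first ratio is decreasing and the second increasing in $p$, so only finitely many rigorously certified numerical integrals (at $p=0,1,2,3,4$) are needed. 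If you want to salvage your route, you would need to either carry out the rigorous tabulation of $\mu_\pm$ with an explicit treatment of the crossing region uniformly in $p$, or replace the single-sign-change claim by a finitely checkable criterion as the paper does.
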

We shall prove this proposition with
$$\psi(x,y,z) =  \sin(2\pi(x+y))-\cos(2\pi(y-z))-\sin(2\pi(x+z)),$$
which is an eigenfunction on $\mathbb{T}^3 \cong \R^3/\Z^3$ with an eigenvalue of $8\pi^2$. 
This choice of eigenfunction is the same (up to scaling) as the counterexample in \cite{eigentorus}.
In particular, we shall prove that $\|\psi_{+}\|_p/\|\psi_{-}\|_p>1$. Since we know $\|\psi_+\|_1/\|\psi_-\|_1=1$, it will thus be enough to show that $\int \psi_+^p/\int\psi_-^p$ is strictly increasing in $p$ for $p\geq 1$. It is easy to compute
\begin{align} \label{eq:derivative}
\pdv{p}\Big(\frac{\int \psi_+^p}{\int\psi_-^p}\Big) = \frac{\int \psi_+^p}{\int\psi_-^p}\Big(\frac{\int \psi_+^p\log\psi_+}{\int\psi_+^p}- \frac{\int \psi_-^p\log\psi_-}{\int\psi_-^p}\Big),
\end{align}
and using the well-known inequalities
$1-1/x \leq \log x\leq x-1$ for $x >0$ in this expression, it follows that \eqref{eq:derivative} is at least
$$
\frac{\int \psi_+^p}{\int\psi_-^p}\Big(2-\frac{\int \psi_+^{p-1}}{\int\psi_+^p}-\frac{\int\psi_-^{p+1}}{\int\psi_-^p}\Big).
$$
For $p\geq 1$ and $q\geq 0$, let
\[f(p)=\frac{\int \psi_+^{p-1}}{\int\psi_+^p}\quad \text{and} \quad g(q)=\frac{\int \psi_-^{q+1}}{\int\psi_-^q}.\] 
We will then be done if we show
that $f(p)+g(p)<2$ for $p\geq 1$. Let us start with a proposition about the monotonicity behaviour of these functions.


\begin{lemma} \label{lem:mono}
The function $f(p)$ is decreasing for $p\geq 1$ while $g(q)$ is increasing for $q\geq 0$.
\end{lemma}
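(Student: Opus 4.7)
The plan is to reduce both monotonicity claims to the log-convexity of the moment functions
\[\Phi(p) := \log \int \psi_+^p \qquad \text{and} \qquad \Psi(q) := \log \int \psi_-^q.\]
Indeed, $\log f(p) = \Phi(p-1) - \Phi(p)$ and $\log g(q) = \Psi(q+1) - \Psi(q)$, so differentiating in the parameter gives
\[\frac{f'(p)}{f(p)} = \Phi'(p-1) - \Phi'(p), \qquad \frac{g'(q)}{g(q)} = \Psi'(q+1) - \Psi'(q).\]
Once $\Phi$ and $\Psi$ are shown to be convex (equivalently, $\Phi'$ and $\Psi'$ non-decreasing), the right-hand sides are $\leq 0$ and $\geq 0$ respectively, which yields the claim.

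The convexity of $\Phi$ is the classical consequence of H\"older's inequality. For $t \in (0,1)$ and $p_1, p_2 > 0$, applying H\"older with conjugate exponents $1/t$ and $1/(1-t)$ to the factorization $\psi_+^{tp_1 + (1-t)p_2} = \psi_+^{tp_1} \cdot \psi_+^{(1-t)p_2}$ gives
\[\int \psi_+^{tp_1 + (1-t)p_2} \leq \Bigl(\int \psi_+^{p_1}\Bigr)^t \Bigl(\int \psi_+^{p_2}\Bigr)^{1-t},\]
and taking logarithms is precisely the statement $\Phi(tp_1 + (1-t)p_2) \leq t\Phi(p_1) + (1-t)\Phi(p_2)$. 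The argument for $\Psi$ is identical.

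The only technical matter is a regularity check. Since $\psi$ is smooth and bounded on $\mathbb{T}^3$, for every $p > 0$ and integer $k \geq 0$ the integrand $\psi_\pm^p |\log \psi_\pm|^k$ is bounded above by the finite constant $\sup_{0 \leq x \leq \|\psi\|_\infty} x^p |\log x|^k$; this supplies a uniform dominator, justifies differentiation under the integral, and shows $p \mapsto \int \psi_\pm^p$ is smooth on $(0, \infty)$. Continuity of $f$ and $g$ down to the endpoints $p = 1$ and $q = 0$ then follows from dominated convergence, together with the fact that $\text{vol}(\{\psi > 0\})$ and $\text{vol}(\{\psi < 0\})$ are strictly positive (as $\psi$ is a nonconstant eigenfunction with zero mean). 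I expect this endpoint regularity to be the most delicate, but still routine, piece of the argument; the conceptual content is simply H\"older-based log-convexity followed by a one-line logarithmic derivative computation.
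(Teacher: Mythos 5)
Your proof is correct, and it rests on the same underlying fact as the paper's --- log-convexity of $p \mapsto \int \psi_\pm^p$ --- but executes it by a genuinely different route. The paper only establishes the midpoint version of log-convexity, via Cauchy--Schwarz: $\int \psi_\pm^p \leq \bigl(\int\psi_\pm^{p-\epsilon}\int\psi_\pm^{p+\epsilon}\bigr)^{1/2}$; it then passes from increments of size $\epsilon=1/N$ to increments of size $1$ by telescoping a product of $N$ such inequalities, and finally from rational to arbitrary $\epsilon$ by density and continuity. You instead invoke the full H\"older log-convexity of $\Phi$ and $\Psi$ and conclude via the sign of the logarithmic derivative of $\log f(p)=\Phi(p-1)-\Phi(p)$. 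Your route is shorter and more conceptual, at the cost of the (routine, and correctly supplied) justification of differentiation under the integral sign and of continuity at the endpoints $p=1$, $q=0$, where $\int\psi_\pm^0$ must be read as $\mathrm{vol}(\{\pm\psi>0\})$; this endpoint check is genuinely needed since the differential argument only covers the open half-lines, and you rightly flag it. The paper's route is derivative-free but needs the telescoping and density bookkeeping. One remark: you could have the best of both worlds by skipping derivatives entirely and using the monotonicity of chord slopes of the convex function $\Phi$, which gives $\Phi(p-1)-\Phi(p)\geq \Phi(p-1+\epsilon)-\Phi(p+\epsilon)$ for any $\epsilon>0$ directly, i.e.\ $f(p)\geq f(p+\epsilon)$, with no regularity input beyond H\"older and the endpoint continuity.
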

\begin{proof} 
Let $p>0$ and $\epsilon>0$. The Cauchy--Schwarz inequality yields
\begin{align*}
\int \psi_{\pm}^p &\leq \Big(\int\psi_{\pm}^{p-\epsilon}\int\psi_{\pm}^{p+\epsilon}\Big)^{1/2}.
\end{align*}
Since these integrals are all nonnegative, we can square both sides and rearrange to obtain
\begin{equation}\label{eq:epsilon}
\frac{\int \psi_{\pm}^p}{\int\psi_{\pm}^{p-\epsilon}}\leq \frac{\int \psi_{\pm}^{p+\epsilon}}{\int\psi_{\pm}^{p}}.
\end{equation}
Now fix $N\in\N$. The inequality \eqref{eq:epsilon} holds in particular for $\epsilon=N^{-1}$ and $p$ replaced with $p-k\epsilon$ whenever $p\geq 1$ and $0\leq k\leq N-1$ is an integer. Taking the product of these inequalities over $0\leq k\leq N-1$ yields telescoping products that give
\begin{equation}\label{eq:ineqinp}
\frac{\int \psi_{\pm}^p}{\int\psi_{\pm}^{p-1}}\leq \frac{\int \psi_{\pm}^{p+\epsilon}}{\int\psi_{\pm}^{p-1+\epsilon}}.
\end{equation}
Rearranging, we get in particular
\begin{equation}\label{eq:rational}
f(p)=\frac{\int \psi_{+}^{p-1}}{\int\psi_{+}^{p}}\geq \frac{\int \psi_{+}^{p-1+\epsilon}}{\int\psi_{+}^{p+\epsilon}} = f(p+\epsilon).
\end{equation}
Iterating this inequality implies that \eqref{eq:rational} holds when $\epsilon>0$ is a rational number, and by continuity of $f$ and the density of the rationals, it also holds for any $\epsilon>0$. It follows that $f$ is decreasing.

Finally, let $p=q+1$ in \eqref{eq:ineqinp} to obtain
\begin{align*}
g(q)=\frac{\int \psi_{-}^{q+1}}{\int\psi_{-}^{q}}\leq \frac{\int \psi_{-}^{q+1+\epsilon}}{\int\psi_{-}^{q+\epsilon}} = g(q+\epsilon)
\end{align*}
for $\epsilon=N^{-1}$ and $q\geq 0$. By the same reasoning as above, this inequality holds for any $\epsilon>0$ and hence $g$ is increasing.
\end{proof}

The proof of the next lemma is the key component of our counterexample. The computer-assisted argument it relies on is more intricate than the rest, so we dedicate the next section to it.

\begin{lemma} \label{lem:estimates}
The following estimates hold:
\begin{align*}
f(1) < 0.8, \qquad f(2) < 0.6, \qquad f(3) < 0.5,
\end{align*}
and 
\begin{align*}
\|g\|_\infty \leq 1.5, \qquad g(2) < 1.2, \qquad g(3) < 1.3.
\end{align*}
\end{lemma}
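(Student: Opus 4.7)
The plan is to exploit a hidden two-dimensional structure of $\psi$ and then bound the required integrals by rigorous numerical means. First, I would observe that setting $v = y - z$ and $w = x + z$, so that $x + y = v + w$, allows us to rewrite
\[
\psi(x,y,z) = \sin(2\pi(v+w)) - \cos(2\pi v) - \sin(2\pi w) =: h(v,w).
\]
The map $(x,y,z)\mapsto(v,w,y)$ is a diffeomorphism of tori with unit Jacobian, so each integral $\int_{\mathbb{T}^3}\psi_\pm^k$ collapses to the two-dimensional integral $\int_{\mathbb{T}^2} h_\pm^k\,dv\,dw$, which is substantially easier to handle numerically.

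For the estimate $\|g\|_\infty\leq 1.5$, I would note that for every $q\geq 0$, $g(q)=\int\psi_-^{q+1}/\int\psi_-^q\leq\|\psi_-\|_\infty$, so it suffices to show $h\geq -3/2$ on $\mathbb{T}^2$. I would verify this by finding the critical points of $h$: setting $\partial_v h=\partial_w h=0$ yields $\cos(2\pi(v+w)) = -\sin(2\pi v) = \cos(2\pi w)$, and the identity $\cos(2\pi(v+w))=\cos(2\pi w)$ forces either $v\equiv 0$ or $v\equiv -2w\pmod 1$. Each case reduces the remaining trigonometric constraint to an elementary equation with finitely many solutions in $[0,1)^2$, and direct evaluation at each critical point shows $\min h=-3/2$, so the bound is actually attained.

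The estimates $f(1)<0.8$, $f(2)<0.6$, $f(3)<0.5$, $g(2)<1.2$, $g(3)<1.3$ require rigorous bounds on $\int h_\pm^k\,dv\,dw$ for $k\in\{0,1,2,3,4\}$. I would partition $[0,1]^2$ into a uniform grid of small squares and, on each square, apply interval arithmetic to the trigonometric expression defining $h$ to produce an enclosure $[h_{\min},h_{\max}]$. Squares with $h_{\min}\geq 0$ contribute only to $\int h_+^k$, squares with $h_{\max}\leq 0$ contribute only to $\int h_-^k$, and on each sign-definite square a midpoint quadrature with error controlled by the uniformly bounded second derivatives of $h^k$ gives tight upper and lower bounds. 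The relatively few squares straddling the nodal set $\{h=0\}$ are handled by the crude bound $\text{area}\cdot\max(|h_{\min}|,h_{\max})^k$. Summing the per-square enclosures yields rigorous interval bounds on each integral, from which the stated inequalities for each $f(p)$ and $g(q)$ follow by interval division.

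The main obstacle is ensuring that the width of the final enclosures is small enough to separate the computed ratios from the stated thresholds. However, since the target gaps (e.g.\ $0.8-f(1)$) are of order $10^{-1}$ rather than razor-thin, and $h$ is a trigonometric polynomial whose derivatives are bounded by small explicit constants, a modestly fine grid combined with basic interval arithmetic will suffice to close the argument; this is where the \emph{computer-assisted} aspect of the proof enters.
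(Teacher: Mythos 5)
Your proposal is correct and, at its core, follows the same strategy as the paper: bound $\|g\|_\infty$ by $\|\psi_-\|_\infty=3/2$ via an exact optimization, and obtain the remaining five inequalities from rigorous upper and lower numerical bounds on $\int\psi_\pm^k$ for $k=0,\dots,4$, classifying grid cells by the sign of $\psi$ and treating the cells straddling the nodal set with a crude bound. The differences are worth noting. First, your change of variables $v=y-z$, $w=x+z$ is a genuine improvement: the map $(x,y,z)\mapsto(y-z,\,x+z,\,y)$ is given by an integer matrix of determinant $-1$, hence an automorphism of $\mathbb{T}^3$ with unit Jacobian, and since $h$ does not depend on the third coordinate every integral collapses to $\mathbb{T}^2$; the paper instead works directly in three dimensions with $1500^3$ cubes, so your reduction cuts the computational cost dramatically for the same resolution. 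Your explicit critical-point computation for $\min h=-3/2$ (attained at $w=1/12,\,v=5/6$ and $w=5/12,\,v=1/6$, where all three terms equal $-1/2$) also fills in what the paper dismisses as ``a simple optimization argument.'' Second, where you use interval arithmetic plus a second-derivative midpoint-rule error bound per square, the paper uses a first-derivative (Lipschitz) bound $|\psi(\mathbf{x})-\psi(\mathbf{y})|\leq 3\pi\sqrt{3}\,\ell$ to over- and under-estimate the integrand on each cube, and then performs a manual machine-epsilon analysis of the floating-point rounding; both mechanisms are rigorous, and interval arithmetic packages the rounding control more cleanly. The only caveat is the one inherent to any computer-assisted argument: the proof is complete only once the computation is actually executed and the resulting enclosures are verified to clear the thresholds, but since the gaps to be certified are of order $10^{-1}$ and $h$ has small explicit derivative bounds, there is no reason to doubt that a modest grid suffices, exactly as the paper's own computation confirms.
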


We now couple our observation on the monotonicity behaviour of $f$ and $g$ with these bounds 
by considering $p$ over three intervals. If $1\leq p\leq 2$, then by Lemmas \ref{lem:mono} and \ref{lem:estimates}, $f(p)+g(p)\leq f(1)+g(2) < 0.8+1.2=2$. Similarly, $f(p)+g(p)<1.9$ when $2\leq p\leq 3$ and $f(p)+g(p)<2$ when $3\leq p\leq \infty$. This concludes the proof of \cref{prop:counter} assuming Lemma \ref{lem:estimates}.

\section{Proof of \cref{lem:estimates}} \label{sec:key}
A simple optimization argument yields $\|\psi_-\|_{\infty}=3/2$. Thus, for all $p\geq 0$,
$$
|g(p)| = \frac{\int \psi_-^{p+1}}{\int \psi_-^{p}} \leq \|\psi_-\|_\infty=3/2.
$$

Verifying the other bounds requires more work. To do so, we  use a computer-assisted approach. Namely, we implement code in Python to obtain numerical bounds on $\int \psi_{\pm}^p$  (and hence on $g(p)$ and $f(p)$). We then estimate the cumulative error made in these computations. By proving that this error is sufficiently small, we guarantee that the rest of the bounds in \cref{lem:estimates} hold.

We adopt the following notation. We write $\mathbf{x}$ and $\mathbf{y}$ for elements of $[0,1]^3$. For a quantity $Q$, we write $\overline{Q}$ for its computer output and $E(Q)$ for the computational error made in approximating $Q$, which is such that $|Q-\overline{Q}|\leq E(Q)$. 

\subsection{Description of method to bound $\int\psi_{\pm}^p$}
We divide $[0,1]^3$ into $1500^3$ cubes, each of side length $\ell:=1/1500$. We then approximate separate midpoint Riemann sums using this partition, summing over all cubes where their midpoint value is either positive or negative, respectively. By purposefully over and underestimating these sums, we arrive at the numerical bounds on $\int\psi_{\pm}^p$ we desire.

Since we are using a computer-assisted approach, we need to know the sign of $\psi(\mathbf{x})$ in every cube with reasonable certainty, so that it is added to the correct Riemann sum approximating $\int\psi_{\pm}^p$. Without loss of generality, let us focus on the region $\psi(\mathbf{x})\leq 0$.
Since the norm of the gradient of $\psi$ is bounded above by $6\pi$, it follows by the mean value theorem and the Cauchy--Schwarz inequality that
$$|\psi(\mathbf{x})-\psi(\mathbf{y})| \leq  3\pi\sqrt{3}\ell$$
for any $\mathbf{y}$ in a cube of side length $\ell$ centered at $\mathbf{x}$. Denote this upper bound on the error in a given cube by $\alpha$.
Henceforth, we thus focus on the region $\psi(\mathbf{x})< -\alpha$, and to try to control rounding errors, we check the condition $\overline{\psi}(\mathbf{x})<-1.1\alpha$. If we are able to compute $\psi(\mathbf{x})$ within a certain accuracy, say if $E(\psi(\mathbf{x})) < 0.05\alpha$, then we will surely have that for any $\mathbf{y}$ in a cube centered at $\mathbf{x}$,
$$\psi(\mathbf{y})=\psi(\mathbf{y})-\psi(\mathbf{x})+\psi(\mathbf{x})-\overline{\psi}(\mathbf{x})+\overline{\psi}(\mathbf{x}) < \alpha+0.05\alpha-1.1\alpha<0.$$ 
We shall see later that our Python code indeed allows us to compute $\psi(\mathbf{x})$ with accuracy $<0.05\alpha$. 

We define five variables $S_{U,\pm}$, $S_{L,\pm}$, and $S_N$ via the following algorithm, where once again $\pm$ will be used when the statement applies to both the $+$ and $-$ variables. Initially, set these quantities equal to zero. Our goal is to have $S_{U,+}$ be an upper bound for the Riemann sums over cubes that admit only positive (respectively negative for $S_{U,-}$) values of $\psi$, and similarly for $S_{L,\pm}$ to be a lower bound, while $S_N$ will count the number of cubes left out of either of these sums. 
Thus, for the midpoint $\mathbf{x}$ of each cube in our partition of $[0,1]^3$, execute the following procedure:
 if $\psi(\mathbf{x})>1.1\alpha$, add $\ell^3(|\psi(\mathbf{x})|+1.1\alpha)^p$ to $S_{U,+}$ and $\ell^3(|\psi(\mathbf{x})|-1.1\alpha)^p$ to $S_{L,+}$; if $\psi(\mathbf{x})<-1.1\alpha$, add the same quantities to $S_{U,-}$ and $S_{L,-}$; and if $|\psi(\mathbf{x})|\leq 1.1\alpha$, add one to $S_N$.


 
 After performing this for all $1500^3$ cubes, we obtain bounds for the Riemann sums estimates of $\int\psi^p_\pm$. 
 For an upper bound on $\int\psi^p_\pm$, we take  $U_{\pm}:=S_{U,\pm}+ S_N\cdot(2.2\alpha
)^p\ell^3$, which comes from the fact that $|\psi(\mathbf{y})|<2.2\alpha$ if $\mathbf{y}$ is in a cube counted in $S_N$. For a lower bound on $\int\psi^p_\pm$, we simply take $L_{\pm}:=S_{L,\pm}$.\\

Implementing this in Python, the computer outputs of these bounds are shown in Table \ref{table:1} for $p=0,1,2,3,4$, where $p=0$ means we are just estimating the volume of positive cubes. This computation also gave $S_N = 17199000$. The commented Python code can be retrieved from \href{https://github.com/gbeiner/Counterexample-To-Symmetry-of-Lp-Norms-of-Eigenfunctions}{\tt{https://github.com/gbeiner/Counterexample-To-Symmetry-of-Lp-Norms-of-Eigenfunctions}}.

 \begin{table}[ht]
\begin{tabular}{|c | c | c | c |}
\hline
\\[-1em]
$p$ & $L_{+}$& $U_{+}$  \\ 
 \hline \hline

0 & 0.396101 &  0.401198  \\
\hline
1 &  0.511500  & 0.521105  \\
\hline
2 & 0.954454 &  0.979178  \\
\hline
3 & 2.063070 &  2.132510  \\
\hline
4 & 4.820963 &  5.021853  \\
\hline

\end{tabular}
\qquad \qquad 
\begin{tabular}{|c | c | c | c |}
\hline
\\[-1em]
$p$ & $L_{-}$& $U_{-}$  \\ 
 \hline \hline

0 & 0.598802 &  0.603899  \\
\hline
1 & 0.509072 &  0.523531  \\
\hline
2 & 0.520968 &  0.545691  \\
\hline
3 & 0.578638 &  0.616942  \\
\hline
4 & 0.676264 &  0.733500  \\
\hline

\end{tabular} 
\caption{Upper and lower Riemann sums estimates of $\int \psi_+^p$ (left) and $\int \psi_-^p$ (right) for $p=0,1,2,3,4$.}\label{table:1}
\end{table}
We then calculate that
\begin{equation} \label{eq:computations}
f(1) < 0.785,\quad f(2) < 0.546,\quad f(3) < 0.475,\quad g(2) < 1.185,\quad g(3) < 1.268.
\end{equation}

\subsection{Error control} \label{subsection:error}
We will be done if we can show that the absolute error made in computing each of the values in \eqref{eq:computations} is small enough for \cref{lem:estimates} to still hold when we add these errors to our approximations. In particular, comparing the statement of \cref{lem:estimates} with \eqref{eq:computations}, we see that if
$
E(U_{\pm}), E(L_{\pm}) < 0.01,
$
then our estimates hold. We also need to make sure that $E(|\psi(\mathbf{x})|)<0.05\alpha$.
Observe that
\begin{equation} \label{eq:errorRie}
E(L_{\pm})\leq E(U_{\pm}) \leq 1500^3\cdot E(\ell^3(|\psi(\mathbf{x})|+1.1\alpha)^p) + S_N\cdot E((2.2\alpha
)^p\ell^3).
\end{equation}

In the following, we write $\veps_m$ for machine epsilon in Python.
By the mean value theorem applied with respect to $\overline{2\pi(x+y)}$, the error in $\sin(2\pi(x+y))$ is at most $E(2\pi(x+y)) + \veps_m$, where the $\veps_m$ takes into consideration any rounding error. The same reasoning applies to the other two trigonometric functions making up $\psi$, and so
$$
E(|\psi(\mathbf{x})|) < 3(E(2\pi(x+y) + \veps_m) + 2\veps_m.
$$



To upper bound the previous errors, we can use simple error formulas for sums and products: $E(Q+R) = E(Q)+E(R)$ and
$E(QR)=(Q+E(Q))(R+E(R))-QR$. We can also simplify the computations by upper bounding a few estimates provided by Python: $E (\pi) < 4\veps_m$, $E(1/1500)<\veps_m$, $E (1.1)<\veps_m$, and $E (\sqrt{3})<2\veps_m$.
It is then a straightforward but tedious computation to check that 
$
E(|\psi(\mathbf{x})|) < 72080\veps_m,
$
which is smaller than $0.05\alpha$ for our choice of $\ell$,
and that both $E(\ell^3(|\psi(\mathbf{x})|+1.1\alpha)^p)$ and $E((2.2\alpha)^p\ell^3)$ are bounded above by $\veps_m$, whence
$$E(U_{\pm}) < (1500^3 + 17199000)\veps_m
$$
by \eqref{eq:errorRie}. We can upper bound $\veps_m$ by $2.23\cdot 10^{-16}$ (the Python commands \texttt{import sys} and then \texttt{sys.float$\_$info.epsilon}
can be used to check this), yielding a computational error bounded above by $10^{-6}$. This is much smaller than required.

\medskip

\subsection*{Acknowledgments}
\noindent
This research was conducted as part of the 2021 Fields Undergraduate Summer
Research Program. The authors are grateful to the Fields Institute for their financial support and facilitating our online collaboration. The authors would also like to thank \'Angel D. Mart\'inez and Francisco Torres de Lizaur for their guidance and suggesting this project, as well as reviewing an earlier version of this work.

\bibliographystyle{acm}
\bibliography{references.bib}
\parindent0pt
\end{document}